\newtheorem{thm}{Theorem}[section]
\newtheorem{defn}[thm]{Definition}
\newtheorem{prop}[thm]{Proposition}
\newtheorem{rem}[thm]{Remark}
\newtheorem{ex}[thm]{Example}
\newcommand{\ds}{\displaystyle}
\begin{document}

\title[Short note on phase retrievable weaving fusion frames]{Short note on phase retrievable weaving fusion frames}


\author[A. Bhardwaj]{Avinash Bhardwaj}

\author[A. Bhandari]{Animesh Bhandari$^*$}
\address{Department of Mathematics\\ SRM University, {\it AP} - Andhra Pradesh\\ Neeru Konda, Amaravati, Andhra Pradesh - 522240\\ India}
\email{avinashbhardwaj320@gmail.com,bhandari.animesh@gmail.com, animesh.b@srmap.edu.in}
\subjclass[2010]{42C15, 46A32, 47A05\\ * is the corresponding author}


\begin{abstract}
Fusion frames are extensively studied due to their effectiveness in recovering signals from large-scale data. They are applicable in distributed processing, wireless sensor networks, and packet encoding systems due to their robustness and redundancy. Motivated by the foundational work of Bemrose et al.\cite{Be16} and Balan\cite{Ba13}, this paper investigates the theoretical properties and characterizations of phase retrievable weaving fusion frames. These frames offer enhanced redundancy and stability in signal reconstruction. We present key results that deepen the understanding of their structure and behaviour. Lastly, an application involving probabilistic erasure is explored to demonstrate their practical utility.


\noindent \textbf{Keywords:} fusion frame; weaving fusion frames, phase retrievable weaving fusion frames.
\end{abstract}

\maketitle

\section{Introduction}
The concept of Hilbert space frames was introduced by Duffin and Schaeffer in 1952 \cite{DuSc52}.
Its significance gained prominence through the influential work of Daubechies, Grossman, and Meyer in 1986 \cite{DaGrMa86}.
Since then, frame theory has found widespread applications in operator theory \cite{HaLa00}, harmonic analysis \cite{Gr01}, wavelet analysis \cite{GuSh18}, and signal/image processing \cite{Fe99, ArZe16}.
It is also instrumental in sensor networks \cite{CaKuLiRo07}, data analysis \cite{CaKu12}, and in the study of retro Banach frames \cite{VeVaSi19}.
The theory has evolved through generalizations like fusion frames \cite{CaKu04}, $G$-frames \cite{Su06}, $K$-frames \cite{Ga12}, and $K$-fusion frames \cite{Bh17}.
These developments have enriched the literature and extended frame theory's utility across mathematics and engineering.

Classical frames and their weaving counterparts offer stable and redundant signal representations, playing a key role in signal processing and functional analysis. In distributed environments, their limitations arise prominently, especially when handling projections onto multiple subspaces. This becomes crucial in applications like wireless sensor networks and packet encoding, where robustness and flexibility are essential. Fusion frames and their weaving forms extend classical frames to weighted subspace collections, overcoming these challenges.
They enhance adaptability and stability in distributed processing, proving effective tools in real-world signal reconstruction.
Additionally, weaving fusion frames have profound implications in theoretical areas such as the Kadison-Singer problem and optimal subspace packing.

Over the past few years, extensive research has been conducted on phase retrieval and norm retrieval frames. However, phase retrievable fusion frames and their weaving forms remain unexplored.
In particular, the application of weaving fusion frames in the context of probabilistic erasure has not been addressed. To bridge these gaps, we investigate various properties and characterizations of phase retrievable weaving fusion frames. This study sheds light on their structural behaviour and theoretical foundations. Furthermore, we explore their potential in enhancing robustness under probabilistic erasure scenarios.

Phase-retrievable weaving fusion frames find natural applications in quantum information theory, particularly in the design of quantum measurement schemes where only intensity (magnitude-squared) information is available. In such scenarios, the quantum system's state is not directly accessible due to phase loss in measurements. These frames help reconstruct a pure or mixed quantum state using intensity data from multiple fusion frame components.

In this context, the Hilbert space $\mathcal H$ represents the state space of the quantum system.
Each fusion frame subspace models a quantum observable or a measurement channel. Weaving fusion frames corresponds to different measurement configurations. The phase retrievability ensures that the entire quantum state can still be reconstructed from intensity-only data across these configurations. The weaving ensures robustness and redundancy, while phase retrievability ensures faithful recovery despite missing phase information  in optical and quantum communication systems.

Throughout this paper, $\mathcal{H}$ is a separable Hilbert space and $\mathcal H^n$ is the $n$-dimensional Hilbert space. We denote by $\mathcal{L}(\mathcal{H}_1, \mathcal{H}_2)$ the space of all bounded linear operators from  $\mathcal{H}_1$ into $\mathcal{H}_2$, and $\mathcal L(\mathcal H)$ for $\mathcal L(\mathcal H, \mathcal H)$. For $T \in \mathcal{L}\mathcal{(H)}$, we denote $D(T), N(T)$ and $R(T)$ for domain, null space and range of $T$, respectively. For a collection of closed subspaces $\mathcal W_i$ of $\mathcal H$ and scalars $w_i$, $i\in I$, the weighted collection of closed subspaces $\lbrace (\mathcal W_i , w_i) \rbrace_{i \in I}$ is denoted by $\mathcal W_w$. We consider $I$ to be countable index set, $\mathcal I$ is the identity operator and $P_{\mathcal V}$ is the orthogonal projection onto $\mathcal V$.

\section{Preliminaries}\label{Sec-Preli}

Before diving into the main sections, throughout this section we recall basic definitions and results needed in this paper. For detailed discussion regarding frames and their applications we refer \cite{CaKu12, Sh20, Ch16, Ha07, Ko23}.

\subsection{Frame} A collection  $\{ f_i \}_{i\in I}$ in $\mathcal{H}$ is called a \emph{frame} if there exist constants $A,B >0$ such that \begin{equation}\label{Eq:Frame} A\|f\|^2~ \leq ~\sum_{i\in I} |\langle f,f_i\rangle|^2 ~\leq ~B\|f\|^2,\end{equation}for all $f \in \mathcal{H}$. The numbers $A, B$ are called \emph{frame bounds}. The supremum over all $A$'s and infimum over all $B$'s satisfying above inequality are called the \emph{optimal frame bounds}.
If a collection satisfies only the right inequality in (\ref{Eq:Frame}), it is called a {\it Bessel sequence}.

Given a frame $\{f_i\}_{i\in I}$ for $\mathcal{H}$, the \emph{pre-frame operator} or \emph{synthesis operator} is a bounded linear operator $T: l^2(I)\rightarrow \mathcal{H}$ and is defined by $T\{c_i\}_{i \in I} = \ds \sum_{i\in I} c_i f_i$. The adjoint of $T$, $T^*: \mathcal{H} \rightarrow l^2(I)$, given by $T^*f = \{\langle f, f_i\rangle\}_{i \in I}$, is called the \emph{analysis operator}. The \emph{frame operator}, $S=TT^*: \mathcal{H}\rightarrow \mathcal{H}$, is defined by $$Sf=TT^*f = \sum_{i\in I} \langle f, f_i\rangle f_i.$$ It is well-known that the frame operator is bounded, positive, self adjoint and invertible.

\subsection{Fusion frame}\label{Fusion} Consider a weighted collection of closed subspaces, $\mathcal W_w=\{(\mathcal W_i, w_i)\}_{i \in I}$, of $\mathcal H$. Then $\mathcal W_w$  is said to be a fusion frame for $\mathcal H$, if there exist constants $0<A \leq B <\infty$ satisfying
\begin{equation}\label{Eq:Fus-frame}
	A\|f\|^2 \leq \sum_{i \in I} w_i ^2 \|P_{\mathcal W_i}f\|^2 \leq B\|f\|^2, \end{equation}
where $P_{\mathcal W_i}$ is the orthogonal projection from $\mathcal H$ onto $\mathcal W_i$. Analogously, the frame operator for a fusion frame can be defined similar to that of a classical frame and its explicit representation is $S_{\mathcal W} (f)=\sum \limits_{i \in I} w_i ^2 P_{\mathcal W_i}(f)$, which is also invertible.
Using this invertibility, every $f \in \mathcal H$ can be reconstructed by its fusion frame measurements $\lbrace w_i P_{\mathcal W_i} f \rbrace_{i \in I}$ as
\begin{equation}\label{Eq:Fusion-frame-recons} f=\sum_{i \in I} v_i S_\mathcal W ^{-1} (v_i P_{\mathcal W_i} f) .\end{equation}


\subsection{Weaving Fusion frames}
Let   $\mathcal V_v=\{(\mathcal V_i, v_i)\}_{i \in I}$ and  $\mathcal W_w=\{(\mathcal W_i, w_i)\}_{i \in I}$ be two fusion frames for $\mathcal H$. Then they are called weaving fusion frames for $\mathcal H$ if for every $\sigma \subset I$ and every $f \in \mathcal H$ there exist finite positive constants $A \leq B$ so that $\{(\mathcal V_i, v_i)\}_{i \in \sigma} \cup \{(\mathcal W_i, w_i)\}_{i \in \sigma^c}$ is a fusion frame for $\mathcal H$ with the universal bounds $A \leq B$, i.e.  the following inequality is satisfied:
\begin{equation}\label{weaving}
A \|f\|^2 \leq \sum_{i \in \sigma} v_i ^2 \|P_{\mathcal V_i}f\|^2 +  \sum_{i \in \sigma^c} w_i ^2 \|P_{\mathcal W_i}f\|^2\leq B\|f\|^2.
\end{equation}

\begin{ex}\label{ex1}
Let us consider an orthonormal basis $\{e_n\}_{n=1}^\infty$ for $\mathcal H$. Suppose for every $n$, $\mathcal V_n = \text{span} \{e_n\}$ and $\mathcal W_n = \text{span} \{e_n, e_{n+1}\}$. Since for every $f \in \mathcal H$ and $\sigma \subset \{1, 2, \cdots\}$ we have,
$$\|f\|^2 \leq \sum_{n \in \sigma}  \|P_{\mathcal V_n}f\|^2 +  \sum_{n \in \sigma^c}  \|P_{\mathcal W_n}f\|^2\leq 2\|f\|^2.$$

Therefore, $\{(\mathcal V_n, 1)\}_{n=1}^\infty$ and $\{(\mathcal W_n, 1)\}_{n=1}^\infty$ are weaving fusion frames for $\mathcal H$.
\end{ex}

In the following example we discuss a non-example of weaving fusion frames.

\begin{ex}\label{ex2}
Let us consider an orthonormal basis $\{e_n\}_{n=1}^\infty$ for $\mathcal H$. Suppose for every $n$, $\mathcal V_n = \text{span} \{e_n\}$ and $\mathcal W_1 = \text{span} \{e_2\}$, $\mathcal W_2 = \text{span} \{e_1\}$ and $\mathcal W_n = \text{span} \{e_n\}$ for $n \geq 3$. Since for  $\sigma =\{2\} \subset \{1, 2, \cdots\}$ we have,
$$\sum_{n \in \sigma}  \|P_{\mathcal W_n}e_2\|^2 +  \sum_{n \in \sigma^c}  \|P_{\mathcal V_n}e_2\|^2=0$$

Thus, $\{(\mathcal V_n, 1)\}_{n=1}^\infty$ and $\{(\mathcal W_n, 1)\}_{n=1}^\infty$ are not weaving fusion frames for $\mathcal H$.
\end{ex}

\begin{rem}\label{Remark}
For weaving fusion frames we can define the associated weaving synthesis, analysis and weaving fusion frame operators analogous to the subsection \ref{Fusion}.
\end{rem}

For detailed discussion regarding weaving fusion frames we refer to \cite{De17, Bh25}. 

We present a well-known definition and theorems which serve as a key tool in proving our main results. This theorem provides the necessary foundation to advance our arguments effectively.

\begin{defn}\label{phase-retrieval}\cite{Ju19}
	Let $\{f_i\}_{i=1}^\infty$ be a frame for $\mathcal{H}$.  We say that $\{f_i\}_{i=1}^\infty$ \emph{posses phase retrieval} for $\mathcal{H}$ whenever $ g, j \in \mathcal{H}$ and $|\langle g, f_i \rangle| = |\langle j, f_i \rangle|$ for every $n$, 
	then $g = hj$, where $|h| = 1$.
\end{defn}

\begin{thm}\label{injective-complement} (Comlementary Property)\cite{Ra06}
	Let $(f_i)_{i=1}^m$ be a frame for $\mathcal{H}^n$. Then the following are equivalent:
	\begin{enumerate}
		\item The map $\gamma:  \mathcal H^n/ \{\pm{1}\} \rightarrow \mathbb R^m$ is injective.
		\item  For every $\sigma \subset \{1,2,\ldots,m\}$, either $\{f_i\}_{i \in \sigma}$ spans $\mathcal{H}^n$ or $\{f_i\}_{i \in \sigma^c}$ spans $\mathcal{H}^n$.		
 	\end{enumerate}
 	
  \end{thm}



\begin{thm}\label{Balan}\cite{Ba13}
Let $\text{Sym}(\mathcal H^n)$ be the collection of symmetric operators on an $n$-dimensional Hilbert space $\mathcal H^n$ and $S^{1,1}(\mathcal H^n)=\{T \in Sym(\mathcal H^n): \dim R(T)\leq2\}$. Then the following are satisfied:
\begin{enumerate}
\item $S^{1,0}(\mathcal H^n)=\{T=f\otimes f: f \in \mathcal H^n\}$.
\item For every $T\in S^{1,1}(\mathcal H^n)$, there exist $T_1, T_2 \in S^{1,0}(\mathcal H^n)$ so that $T=T_1 - T_2$.
\item For every $T\in S^{1,1}(\mathcal H^n)$, there exist $f, g \in \mathcal H^n$ so that $T=\frac{1}{2} (f\otimes g + g\otimes f)$.
\end{enumerate}
\end{thm}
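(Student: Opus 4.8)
The plan is to reduce all three assertions to the spectral theorem for self-adjoint operators on the finite-dimensional space $\mathcal{H}^n$, supplemented by a single polarization-type identity. For part (1), I would read $S^{1,0}(\mathcal{H}^n)$ as the symmetric operators having at most one positive and no negative eigenvalue, that is, the positive semidefinite operators of rank at most one. By the spectral theorem any such $T$ is $T = \lambda\, e \otimes e$ with $\lambda \geq 0$ and $\|e\| = 1$; setting $f = \sqrt{\lambda}\, e$ gives $T = f \otimes f$. Conversely $f \otimes f$ is positive semidefinite of rank at most one, hence lies in $S^{1,0}(\mathcal{H}^n)$, so the two sets coincide.

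For part (2) I would diagonalize a general $T \in S^{1,1}(\mathcal{H}^n)$. Since $T$ is symmetric with at most a two-dimensional range and (by the $(1,1)$ signature) at most one positive and one negative eigenvalue, the spectral theorem furnishes orthonormal vectors $e_1, e_2$ and scalars $\lambda_1, \lambda_2 \geq 0$ with $T = \lambda_1\, e_1 \otimes e_1 - \lambda_2\, e_2 \otimes e_2$. Writing $u = \sqrt{\lambda_1}\, e_1$ and $v = \sqrt{\lambda_2}\, e_2$ and taking $T_1 = u \otimes u$, $T_2 = v \otimes v$, both of which belong to $S^{1,0}(\mathcal{H}^n)$ by part (1), yields the decomposition $T = T_1 - T_2$.

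For part (3) I would begin from the representation $T = u \otimes u - v \otimes v$ produced above and search for $f, g$ realizing the symmetrized product $\tfrac{1}{2}(f \otimes g + g \otimes f)$. The natural choice is $f = u + v$ and $g = u - v$: expanding both outer products and adding, the cross terms $u \otimes v$ and $v \otimes u$ cancel because they appear with opposite signs, leaving $f \otimes g + g \otimes f = 2(u \otimes u - v \otimes v)$, so that $\tfrac{1}{2}(f \otimes g + g \otimes f) = T$.

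The spectral decompositions underlying (1) and (2) are routine; the only genuinely inventive step is the polarization choice $f = u + v$, $g = u - v$ in (3), where the point to verify is precisely that the cross terms cancel — a fact that rests only on the bilinear expansion of the outer product and the opposite signs, not on the orthogonality of $u$ and $v$. The one subtlety I would flag is that part (2) relies on the signature constraint implicit in $S^{1,1}(\mathcal{H}^n)$, namely that a rank-two element here carries one nonnegative and one nonpositive eigenvalue; this is exactly what permits the splitting into a difference of two rank-one positive semidefinite operators, and a genuinely rank-two positive definite operator would not admit such a decomposition.
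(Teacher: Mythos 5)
Your proof is correct, but note that the paper contains no proof of this statement to compare against: the theorem is quoted as a known result from Balan \cite{Ba13}, and the paper's role for it is purely as a tool in Theorem \ref{thm1}. Your spectral-theorem-plus-polarization route is the standard argument and is essentially the one underlying Balan's treatment, so there is no methodological divergence to report. The most valuable part of your write-up is the subtlety you flagged at the end, and it deserves emphasis: as literally stated in the paper, $S^{1,1}(\mathcal H^n)$ is defined as \emph{all} symmetric operators of rank at most two, and under that reading parts (2) and (3) are false. For instance, $T = e_1\otimes e_1 + e_2\otimes e_2$ has rank two; if one could write $T = u\otimes u - v\otimes v$, then $T + v\otimes v = u\otimes u$, but the left-hand side is a sum of positive semidefinite operators whose range contains $R(T)$ and hence has rank at least two, while the right-hand side has rank at most one, a contradiction. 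Likewise the polarization identity
\begin{equation*}
\frac{1}{2}(f\otimes g + g\otimes f) = \frac{1}{4}\left[(f+g)\otimes(f+g) - (f-g)\otimes(f-g)\right]
\end{equation*}
shows any operator of the form in part (3) has at most one positive eigenvalue, so it can never equal $e_1\otimes e_1 + e_2\otimes e_2$. Thus the signature reading you adopted (at most one strictly positive and one strictly negative eigenvalue, which is Balan's actual definition of $S^{1,1}$) is not a convenience but a necessity for the theorem to be true; with it, all three of your arguments are sound, including your correct observation that the cross-term cancellation in (3) uses only bilinearity and not orthogonality of $u$ and $v$.
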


\section{Main Results}\label{Sec-char}

In this section we define phase retrievable weaving fusion frames and discuss their various characterizations. 

\begin{defn}
	Let $f, g \in \mathcal H^n$. Then $f$ and $g$ are said to be similar if there exists $h \in \mathbb T=\{h \in \mathbb C: |h|=1\}$ so that $g=hf$ and it is denoted as $f \sim g$. Suppose $\{(\mathcal V_i, v_i)\}_{i=1}^m$ and $\{(\mathcal W_i, w_i)\}_{i=1}^m$ are weaving fusion frames for $\mathcal H^n$. Define a non-linear operator $\gamma_\sigma$ on the quotient space $\hat{\mathcal H^n}=\mathcal H^n/\sim$ as $\gamma_\sigma: \hat{\mathcal H^n} \rightarrow \mathbb R^m$ and is defined by,
	
		\begin{equation}\label{eq1}
		\left( (\gamma_\sigma(\hat{f}))_i \right)_i^{m} =
		\begin{cases}
			v_i \| P_{V_i} f \| & : i \in \sigma \\
			w_i \| P_{W_i} f \| & : i \in \sigma^c
		\end{cases}
	\end{equation}
	 
	 where $\sigma$ is a subset  of $ \{1, 2, \cdots , m\}$. If for every $\sigma \subset \{1, 2, \cdots , m\}$, $\gamma_\sigma$ is one-one , then $\{(\mathcal V_i, v_i)\}_{i=1}^m$ and $\{(\mathcal W_i, w_i)\}_{i=1}^m$ are said to be phase retrievable weaving fusion frames for $\mathcal H^n$.
	
	 For any two phase retrievable weaving fusion frames $\{(\mathcal V_i, v_i)\}_{i=1}^m$ and $\{(\mathcal W_i, w_i)\}_{i=1}^m$, the associated weaving phase lift operator $\mathcal F_\sigma: Sym(\mathcal H^n) \rightarrow \mathbb R^m$ is defined as, 
	 \begin{equation}
	 	\left( (\mathcal{F}_\sigma T)_i \right)_{i=1}^m =
	 	\begin{cases}
	 		v_i^2 \, \operatorname{tr}(P_{V_i} T) & : i \in \sigma \\
	 		w_i^2 \, \operatorname{tr}(P_{W_i} T) & : i \in \sigma^c
	 	\end{cases}
	 \end{equation}

	 where $\text{Sym}(\mathcal H^n)$ is the collection of symmetric operators on an $n$-dimensional Hilbert space $\mathcal H^n$ and $\sigma \subset \{1, 2, \cdots , m\}.$ \end{defn}
	 
	 We now present an example to demonstrate the preceding definition.
	 
	 \begin{ex}\label{ex3}
	 		
	 	 Let us consider a separable Hilbert space $\mathcal{H} = \mathbb{R}^2$. Suppose
	 	$	 		
	 	\{(\mathcal V_i, 1)\}_{i=1}^3 = \left\{
	 		\left( \operatorname{span} \left( \begin{bmatrix} 1 \\ 0 \end{bmatrix} \right), 1 \right),
	 		\left( \operatorname{span} \left( \begin{bmatrix} 0 \\ 1 \end{bmatrix} \right), 1 \right),
	 		\left( \operatorname{span} \left( \begin{bmatrix} 1 \\ 2 \end{bmatrix} \right), 1 \right)
	 		\right\}
	 $
	 		
	 		and
	 		
	 		$
	 			\{(\mathcal W_i, 1)\}_{i=1}^3 =\left\{
	 			\left( \operatorname{span} \left( \begin{bmatrix} \frac{1}{\sqrt{2}} \\ \frac{1}{\sqrt{2}} \end{bmatrix} \right), 1 \right),
	 			\left( \operatorname{span} \left( \begin{bmatrix} \frac{1}{\sqrt{2}} \\ -\frac{1}{\sqrt{2}} \end{bmatrix} \right), 1 \right),
	 			\left( \operatorname{span} \left( \begin{bmatrix} 2 \\ 1 \end{bmatrix} \right), 1 \right)
	 			\right\}
$	 						
	 		 We claim that the  fusion frames 
	 		$\{(\mathcal V_i, v_i)\}_{i=1}^3$ and $\{(\mathcal W_i, w_i)\}_{i=1}^3$   are phase retrievable weaving fusion frames for $\mathbb R^2$. To verify the same let us assume  $f = \left( \begin{bmatrix} f_1 \\ f_2 \end{bmatrix} \right)$ with 	 $\gamma_\sigma: \hat{\mathbb R^2} \rightarrow \mathbb R^3$ defined by  
	 		\[
	 		\left( (\gamma_\sigma(\hat{f})_i) \right)_i^{3} =
	 		\begin{cases}
	 			v_i \| P_{V_i} f \| & : i \in \sigma \\
	 			w_i \| P_{W_i} f \| & : i \in \sigma^c,
	 		\end{cases}
	 		\]
	 	where $\hat{f} = \{\pm {f}\}$ and $\sigma \subset \{1, 2, 3\}$. If $\gamma_\sigma$ is one-one for every $\sigma \subset \{1, 2, 3\}$, then $\{(\mathcal V_i, v_i)\}_{i=1}^3$ and $\{(\mathcal W_i, w_i)\}_{i=1}^3$ are said to be phase retrievable weaving fusion frames for $\mathbb R^2$. Here we prove the injectivity of $\gamma_\sigma$.
	 	
	 	 \vspace{0.2cm}
	 	
	 	{\bf Case 1:}  $\sigma$ = $\emptyset$ and let us assume that
	 	$$\left( (\gamma_\sigma(\hat{f})_i) \right)_i^{3} = \left( (\gamma_\sigma(\hat{g})_i) \right)_i^{3}.  $$
		 
	 	
	Therefore, $ \left( 
	 \frac{|f_1 + f_2|}{\sqrt{2}}, \;
	 \frac{|f_1 - f_2|}{\sqrt{2}}, \;
	 \frac{|2f_1 + f_2|}{\sqrt{5}}
	 \right)
	 =
	 \left( 
	 \frac{|g_1 + g_2|}{\sqrt{2}}, \;
	 \frac{|g_1 - g_2|}{\sqrt{2}}, \;
	 \frac{|2g_1 + g_2|}{\sqrt{5}}
	 \right).
	 $
	 Thus we have,
	 \begin{equation}\label{vector eqn}
	 f_1 + f_2 = \pm (g_1 + g_2), 
	 f_1 - f_2 = \pm (g_1 - g_2), 
	 2f_1 + f_2 = \pm (2g_1 + g_2).
	 \end{equation}
	 In this case there are total eight possibilities for checking the injectivity of the  equation (\ref{vector eqn}). However, six possibilities are not well defined.  So we consider the well-defined choies accordingly. 
	 
	 \vspace{0.2cm}
	 
	 {\bf Subcase 1.1:} 
	If  $
	 f_1 + f_2 =  (g_1 + g_2), ~
	 f_1 - f_2 = (g_1 - g_2) ~\text{and}~
	 2f_1 + f_2 = (2g_1 + g_2).$
	 	Then it is easy to verify that $f_1 = g_1$ and $f_2= g_2$.
	 	
	 	 \vspace{0.2cm}
	 	
	 	{\bf Subcase 1.2:} 
	 	 Furthermore, if $	f_1 + f_2 =  -(g_1 + g_2), ~
	 	f_1 - f_2 = -(g_1 - g_2),~\text{and}~
	 	2f_1 + f_2 = -(2g_1 + g_2).$
	 	Thus we obtain $f_1 = -g_1$ and $f_2= -g_2.$ 
	 	
	 	Therefore $\hat{f} = \hat{g}$ and hence $\gamma_\sigma$ is injective.
	 	
	 	 \vspace{0.2cm}
	 
	 	{\bf Case 2:}  $\sigma$ = $\{1\}$ and suppose
	 	$$\left( (\gamma_\sigma(\hat{f})_i) \right)_i^{3} = \left( (\gamma_\sigma(\hat{g})_i) \right)_i^{3}.$$
	 	Therefore, $\left( 
	 	{|f_1 |}{}, \;
	 	\frac{|f_1 - f_2|}{\sqrt{2}}, \;
	 	\frac{|2f_1 + f_2|}{\sqrt{5}}
	 	\right)
	 	=
	 	\left( 
	 	{|g_1|}{}, \;
	 	\frac{|g_1 - g_2|}{\sqrt{2}}, \;
	 	\frac{|2g_1 + g_2|}{\sqrt{5}}
	 	\right)
	 	$.
	 	Hence we have ,
	 	$
	 	f_1  = \pm g_1 ,  ~
	 	f_1 - f_2 = \pm (g_1 - g_2) ~\text{and} ~ 
	 	2f_1 + f_2 = \pm (2g_1 + g_2).
	 	$
	 	
	 	
	 	
	 Using a similar argument as in the previous case, we establish that $\hat{f} = \hat{g}$ and consequetly $\gamma_\sigma$ is injective. Similarly, we can show that $\gamma_\sigma$ is injective in all other cases.
	 \end{ex}
	 
	The following example demonstrates the Definition 3.1 does not ensure phase retrievability for all weaving fusion frames in Hilbert spaces.
	 
	\begin{ex}\label{ex4}
		
		Let us consider a separable Hilbert space $\mathcal{H} = \mathbb{C}^2$. Suppose
		$	 		
		\{(\mathcal V_i, 1)\}_{i=1}^3 = \left\{
		\left( \operatorname{span} \left( \begin{bmatrix} 1 \\ 0 \end{bmatrix} \right), 1 \right),
		\left( \operatorname{span} \left( \begin{bmatrix} 0 \\ 1 \end{bmatrix} \right), 1 \right),
		\left( \operatorname{span} \left( \begin{bmatrix} 1 \\ 2 \end{bmatrix} \right), 1 \right)
		\right\}
		$
		
		and
		
		$
		\{(\mathcal W_i, 1)\}_{i=1}^3 =\left\{
		\left( \operatorname{span} \left( \begin{bmatrix} \frac{2}{\sqrt{2}} \\ \frac{1}{\sqrt{2}} \end{bmatrix} \right), 1 \right),
		\left( \operatorname{span} \left( \begin{bmatrix} \frac{1}{\sqrt{2}} \\ -\frac{1}{\sqrt{2}} \end{bmatrix} \right), 1 \right),
		\left( \operatorname{span} \left( \begin{bmatrix} 1 \\ 1 \end{bmatrix} \right), 1 \right)
		\right\}
		$	 						
	 
		  are fusion frames for $\mathbb C^2$.
		  Then it is easy to verify that they are weaving fusion frames, however they are not phase retrievable weaving fusion frames.
		   Here $\sigma = \{1, 2\}$ with $\gamma_\sigma: \hat{\mathbb C^2} \rightarrow \mathbb C^3$ defined by  
		\[
		\left( (\gamma_\sigma(\hat{f})_i) \right)_i^{3} =
		\begin{cases}
			v_i \| P_{V_i} f \| & : i \in \sigma \\
			w_i \| P_{W_i} f \| & : i \in \sigma^c,
		\end{cases}
		\]
		where $\sigma \subset \{1, 2, 3\}$. Consider two distinct vectors 
		$x =  \begin{bmatrix} 1 \\ i \end{bmatrix}$ and $y =  \begin{bmatrix} i \\ 1 \end{bmatrix}$ in the Hilbert space $\mathcal{H} = \mathbb{C}^2.$ Since y is not a unimodular multiple of x, the two are not equivalent in this sense. Yet, by a simple calculation, we see that $\left( (\gamma_\sigma(\hat{x})_i) \right)_i^{3} = \left( (\gamma_\sigma(\hat{y})_i) \right)_i^{3}$ coordinate-wise.
		Thus $\gamma_\sigma$ is not injective.

	\end{ex}
	 

The following result provides a characterization of phase retrievable weaving fusion frames by analyzing the kernel of the phase lift operator. This is further linked to the structure of a symmetric operator whose range has dimension at most two.

\begin{thm}\label{thm1}
Let $\mathcal V_v=\{(\mathcal V_i, v_i)\}_{i=1}^m$ and  $\mathcal W_w=\{(\mathcal W_i, w_i)\}_{i=1}^m$ be two weaving fusion frames for $\mathcal H^n$. Then the following are equivalent:
\begin{enumerate}
\item $\{(\mathcal V_i, v_i)\}_{i=1}^m$ and $\{(\mathcal W_i, w_i)\}_{i=1}^m$ are  phase retrievable weaving fusion frames for $\mathcal H^n$.

\item $\text{Ker} \mathcal F_\sigma \cap S^{1,1} (\mathcal H^n)=\{0\}$, where $\sigma \subset \{1, 2, \cdots , m\}$.
\end{enumerate}
\end{thm}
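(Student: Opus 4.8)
The plan is to treat $\mathcal F$ as the linearization of the nonlinear measurement map $\gamma$ and to transport the injectivity of $\gamma$ into a triviality statement about $\text{Ker}\,\mathcal F$. The bridge is the elementary trace identity $\mathrm{tr}(P_{\mathcal V_i}(f\otimes f))=\langle P_{\mathcal V_i}f,f\rangle=\|P_{\mathcal V_i}f\|^2$, and likewise for $\mathcal W_j$. Substituting this into the definition of $\mathcal F$ gives, for every $f\in\mathcal H^n$,
\begin{equation*}
(\mathcal F(f\otimes f))_{i,j}=v_i^2\|P_{\mathcal V_i}f\|^2+w_j^2\|P_{\mathcal W_j}f\|^2,
\end{equation*}
so that $\mathcal F$ restricted to the rank-one operators $f\otimes f$ reproduces exactly the (squared) weaving fusion frame measurements recorded by $\gamma(\hat f)$. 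First I would record this identity together with the observation that $f\sim g$ if and only if $f\otimes f=g\otimes g$; the latter both makes $\gamma$ well defined on $\hat{\mathcal H^n}$ and reduces phase retrievability (injectivity of $\gamma$) to the implication ``$\mathcal F(f\otimes f)=\mathcal F(g\otimes g)\Rightarrow f\otimes f=g\otimes g$''.

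Next I would exploit the linearity of $\mathcal F$: the equality $\mathcal F(f\otimes f)=\mathcal F(g\otimes g)$ is the same as $f\otimes f-g\otimes g\in\text{Ker}\,\mathcal F$, so the phase-retrieval condition becomes the assertion that $f\otimes f-g\otimes g\in\text{Ker}\,\mathcal F$ forces $f\otimes f-g\otimes g=0$, for all $f,g\in\mathcal H^n$. The decisive step is then to identify the set $\{f\otimes f-g\otimes g:f,g\in\mathcal H^n\}$ with $S^{1,1}(\mathcal H^n)$. One inclusion is immediate, since any such difference is symmetric with range inside $\text{span}\{f,g\}$ and hence of rank at most two; the reverse inclusion is precisely parts (1) and (2) of Theorem \ref{Balan}, which write every $T\in S^{1,1}(\mathcal H^n)$ as $T_1-T_2$ with $T_1,T_2\in S^{1,0}(\mathcal H^n)$, i.e.\ as $f\otimes f-g\otimes g$.

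With this identification the equivalence is essentially a rephrasing. For (1)$\Rightarrow$(2): given $T\in\text{Ker}\,\mathcal F\cap S^{1,1}(\mathcal H^n)$, write $T=f\otimes f-g\otimes g$; then $\mathcal F(f\otimes f)=\mathcal F(g\otimes g)$, so $\gamma(\hat f)=\gamma(\hat g)$, and phase retrievability gives $\hat f=\hat g$, i.e.\ $f\otimes f=g\otimes g$, whence $T=0$. For (2)$\Rightarrow$(1): if $\gamma(\hat f)=\gamma(\hat g)$, then $T:=f\otimes f-g\otimes g$ lies in $\text{Ker}\,\mathcal F\cap S^{1,1}(\mathcal H^n)=\{0\}$, so $f\otimes f=g\otimes g$ and $\hat f=\hat g$; thus $\gamma$ is injective.

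I expect the main obstacle to be the bookkeeping in the bridging step rather than any deep difficulty. One must verify that the functional produced by $\mathcal F(f\otimes f)$ matches $\gamma(\hat f)$ index-by-index, being careful to match the powers of the weights and of the projection norms through the trace identity, and one must invoke Theorem \ref{Balan} in the correct direction so that ranging over all pairs $(f,g)$ is the same as ranging over all $T\in S^{1,1}(\mathcal H^n)$. Once these two points are secured, no inequality or frame-bound estimate enters the argument, since everything rests on the algebraic correspondence between $\gamma$ and $\mathcal F$ and on the structure theorem for symmetric operators of rank at most two.
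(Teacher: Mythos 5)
Your proposal is correct and takes essentially the same route as the paper's own proof: both rest on the trace identity $\mathrm{tr}(P_{\mathcal V_i}(f\otimes f))=\|P_{\mathcal V_i}f\|^2$, on Theorem \ref{Balan} to realize every $T\in S^{1,1}(\mathcal H^n)$ as $f\otimes f-g\otimes g$, and on exactly the two short implications you give for (1)$\Rightarrow$(2) and (2)$\Rightarrow$(1). The only caveat is the degree mismatch you yourself flag --- $\gamma$ records $v_i\|P_{\mathcal V_i}f\|+w_j\|P_{\mathcal W_j}f\|$ while $\mathcal F(f\otimes f)$ records $v_i^2\|P_{\mathcal V_i}f\|^2+w_j^2\|P_{\mathcal W_j}f\|^2$, and equality of the sums in one form does not formally imply equality in the other --- but the paper's proof passes over precisely the same point without comment, so your argument is faithful to, and no less rigorous than, the published one.
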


\begin{proof}
\noindent (\underline{1 $\implies$ 2}) Let $\{(\mathcal V_i, v_i)\}_{i=1}^m$ and $\{(\mathcal W_i, w_i)\}_{i=1}^m$ be  phase retrievable weaving fusion frames for $\mathcal H^n$. Then for every subset $\sigma \subset \{1, 2, \cdots , m\}$ the operator

 $\gamma_\sigma: \mathcal H^n \rightarrow \mathbb R^m$ defined by  
\[
\left( (\gamma_\sigma(\hat{f})_i) \right)_i^{m} =
\begin{cases}
	v_i \| P_{V_i} f \| & : i \in \sigma \\
	w_i \| P_{W_i} f \| & : i \in \sigma^c
\end{cases}
\]
is one-one, where $\sigma\subset \{1, 2, \cdots, m\}$. 
Then applying Theorem \ref{Balan}, for every $T \in \text{Ker} \mathcal F_\sigma \cap S^{1,1} (\mathcal H^n)$ there exist $f, g \in \mathcal H^n$ so that $T=f\otimes f - g\otimes g$ and $\mathcal F_\sigma(T)=0$. Therefore, for every subset $\sigma \subset \{1, 2, \cdots , m\}.$  we have,
\begin{eqnarray*}
	\left( (\mathcal{F}_\sigma)_i \right)_{i=1}^m & = &
	\left\{
	\begin{array}{ll}
		v_i^2 \,\operatorname{tr}(P_{V_i} T) & : i \in \sigma \\
		w_i^2 \,\operatorname{tr}(P_{W_i} T) & : i \in \sigma^c
		
	\end{array}
	\right.
	\\&=& 
		\left\{
	\begin{array}{ll}
		v_i^2 \,\operatorname{tr}(P_{V_i} (f\otimes f - g\otimes g)) & : i \in \sigma \\
		w_i^2 \,\operatorname{tr}(P_{W_i} (f\otimes f - g\otimes g)) & : i \in \sigma^c
	\end{array}
	\right.
\\&=&
0.
\end{eqnarray*}

Hence we have, $$v_i^2 \,\operatorname{tr}(P_{V_i} (f\otimes f))=v_i^2 \,\operatorname{tr}(P_{V_i} (g\otimes g))$$ and 
$$w_i^2 \,\operatorname{tr}(P_{W_i} (f\otimes f))=w_i^2 \,\operatorname{tr}(P_{W_i} (g\otimes g)) $$

Thus we obtain $v_i \| P_{V_i} f \|  = 	v_i \| P_{V_i} g \|$ and  $ w_i \| P_{W_i} f \|  = 	w_i \| P_{W_i} g \|.$

Since $\gamma_\sigma$ is one-one, there exists $h\in\mathbb T$ such that $g=hf$ i.e. $\hat{f} = \hat{g}$ . It follows that $$f\otimes f  = hg\otimes gh^* = g\otimes g .$$Therefore, $T=f\otimes f - g\otimes g=0$. Thus $\text{Ker} \mathcal F_\sigma \cap S^{1,1} (\mathcal H^n)=\{0\}$.
\\

\noindent (\underline{2 $\implies$ 1}) Conversely, if $\gamma_\sigma(\hat f)=\gamma_\sigma(\hat g)$, then for every subset $\sigma \subset \{1, 2, \cdots , m\}.$ 
 we have,
  \begin{eqnarray*}
  \left( (\gamma_\sigma(\hat{f})_i) \right)_i^{m} =
  \begin{cases}
  	v_i \| P_{V_i} f \| \\
  	w_i \| P_{W_i} f \| 
  \end{cases} = & \begin{cases}
  v_i \| P_{V_i} g \| \\
  w_i \| P_{W_i} g \| 
  \end{cases} =  \left( (\gamma_\sigma(\hat{g})_i) \right)_i^{m}.
  \end{eqnarray*}

Therefore, $\left((\mathcal F_\sigma(f\otimes f - g\otimes g))_i \right)
_i^m=0$. Thus $f\otimes f - g\otimes g \in \text{Ker} \mathcal F_\sigma \cap S^{1,1} (\mathcal H^n)$. Since $\text{Ker} \mathcal F \cap S^{1,1} (\mathcal H^n)=\{0\}$, we have $f\otimes f = g\otimes g $.  Hence we say that there exists $h\in \mathbb T$ so that $g=hf$ i.e. $\hat f = \hat g$. Therefore, $\gamma_\sigma$ is one-one. Consequently, $\{(\mathcal V_i, v_i)\}_{i=1}^m$ and $\{(\mathcal W_i, w_i)\}_{i=1}^m$ are  phase retrievable weaving fusion frames for $\mathcal H^n$.
\end{proof} 

The following theorem provides validation for the above characterization result.

\begin{ex}
	Consider a separable Hilbert space $\mathcal{H} = \mathbb{R}^3$. Suppose
	\[
	\{(\mathcal V_i, 1)\}_{i=1}^5 = \left\{
	\begin{aligned}
		&\left( \operatorname{span}\left( \begin{bmatrix} 1 \\ 0 \\ 0 \end{bmatrix} \right), 1 \right),
		\left( \operatorname{span}\left( \begin{bmatrix} 0 \\ 1 \\ 0 \end{bmatrix} \right), 1 \right),
		\left( \operatorname{span}\left( \begin{bmatrix} 0 \\ 0 \\ 1 \end{bmatrix} \right), 1 \right), \\[6pt]
		&\left( \operatorname{span}\left( \begin{bmatrix} 1 \\ -1 \\ 1 \end{bmatrix} \right), 1 \right),
		\left( \operatorname{span}\left( \begin{bmatrix} 2/3 \\ 2 \\ -3 \end{bmatrix} \right), 1 \right)
	\end{aligned}
	\right\}
	\]
	and 
	\[
	\{(\mathcal V_i, 1)\}_{i=1}^5 = \left\{
	\begin{aligned}
		&\left( \operatorname{span}\left( \begin{bmatrix} 5 \\ -3/2 \\ 0 \end{bmatrix} \right), 1 \right),
		\left( \operatorname{span}\left( \begin{bmatrix} -1 \\ 3 \\ 3/2 \end{bmatrix} \right), 1 \right),
		\left( \operatorname{span}\left( \begin{bmatrix} -1 \\ -3 \\ 2/3 \end{bmatrix} \right), 1 \right), \\[6pt]
		&\left( \operatorname{span}\left( \begin{bmatrix} 1 \\ 1 \\ 1 \end{bmatrix} \right), 1 \right),
		\left( \operatorname{span}\left( \begin{bmatrix} 1 \\ 2 \\ 3 \end{bmatrix} \right), 1 \right)
	\end{aligned}
	\right\}
	\]
	are weaving fusion frames for $\mathcal{H} = \mathbb{R}^3$.
	
	\noindent (\underline{1 $\implies$ 2}):
	
It is easy to verify that $\{(\mathcal V_i, v_i)\}_{i=1}^5$ and $\{(\mathcal W_i, w_i)\}_{i=1}^5$ are phase retrievable weaving fusion frames. Then by definition of weaving phase lift operator, for every $\sigma \subset \{1, 2, 3, 4, 5\}$, $\mathcal F_\sigma: Sym(\mathbb R^3) \rightarrow \mathbb R^5$ is defined as, 

	$$\left( (\mathcal{F}_\sigma T)_i \right)_{i=1}^5 =
	\begin{cases}
		v_i^2 \, \operatorname{tr}(P_{V_i} T) & : i \in \sigma \\
		w_i^2 \, \operatorname{tr}(P_{W_i} T) & : i \in \sigma^c
	\end{cases}, ~\text{where} ~T =
	\begin{bmatrix}
		a & b & c \\
		b & d & e \\
		c & e & f
	\end{bmatrix}.$$

	{\bf Case 1:} $\sigma = \{1, 2, 3\}$
	\begin{eqnarray*}
		\left( (\mathcal{F}_\sigma T)_i \right)_{i=1}^m &=&
	\left(
	 v_{1}^{2} tr(P_{V_{1}}(T)),\,
	v_{2}^{2} tr(P_{V_{2}}(T)),\,
	v_{3}^{2} tr(P_{V_{3}}(T)),\,
	w_{4}^{2} tr(P_{W_{4}}(T)),\,
	w_{5}^{2} tr(P_{W_{5}}(T)
		\right).
		\\&=& 
		\left(
		a, d, f, \tfrac{1}{3}(a+2b+2c+d+2e+f), \tfrac{1}{14}(a+4b+6c+4d+12e+9f) 
	\right).
	\end{eqnarray*}
	Therefore,
	
$\text{Ker} \mathcal F_\sigma(T)
	= \Big\{\, T =
	\begin{bmatrix}
		0 & b & c \\
		b & 0 & e \\
		c & e & 0
	\end{bmatrix}
	\;\Big|\; b=3t,\; c=-4t,\; e=t \Big\}$. Thus  $\text{Ker} (\mathcal F_\sigma(T))$ is either a collection of invertiable matrices or zero matrix. Hence we obtain $\text{Ker} \mathcal F_\sigma(T) \cap S^{1,1} (\mathbb R^3)=\{0\}.$\\
	
	{\bf Case 2:} $\sigma = \{1, 2, 3, 4\}$
	\begin{eqnarray*}
		\left( (\mathcal{F}_\sigma T)_i \right)_{i=1}^m & = &
		\left(
		v_{1}^{2} tr(P_{V_{1}}(T)),\,
		v_{2}^{2} tr(P_{V_{2}}(T)),\,
		v_{3}^{2} tr(P_{V_{3}}(T)),\,
		v_{4}^{2} tr(P_{V_{4}}(T)),\,
		w_{5}^{2} tr(P_{W_{5}}(T)) 
		\right).
	\end{eqnarray*}
	Applying foregoing argument as in the previous case, we obtain $\text{Ker} \mathcal F_\sigma(T) \cap S^{1,1} (\mathbb R^3)=\{0\}.$
	 For all the remaining cases, the proof can be carried out in a similar manner.\\
	 
	 \noindent (\underline{2 $\implies$ 1}): It is evident that $\{(\mathcal V_i, v_i)\}_{i=1}^5$ and $\{(\mathcal W_i, w_i)\}_{i=1}^5$ are weaving fusion frames and  $\text{Ker} (\mathcal F_\sigma(T) )\cap S^{1,1}(\mathbb R^3)=\{0\}.$
	Therefore, it is clear to see that for every $\sigma \subset \{1, 2, 3, 4, 5\},$  $\left( (\gamma_\sigma(\hat{f})_i) \right)_i^{5}$ is injective and hence the frames under consideration are phase retrievable weaving fusion frames.	
\end{ex}

The following result presents a necessary and sufficient condition for two fusion frames to form phase retrievable weaving fusion frames.

\begin{thm}
Let $f, g \in \mathcal H^n$ and $[f,g]=\frac{1}{2}(f\otimes g + g\otimes f)$. Then for any two weaving fusion frames $\{(\mathcal V_i, v_i)\}_{i=1}^m$ and $\{(\mathcal W_i, w_i)\}_{i=1}^m$ in $\mathcal H^n$, the following are equivalent:
\begin{enumerate}
\item $\{(\mathcal V_i, v_i)\}_{i=1}^m$ and $\{(\mathcal W_i, w_i)\}_{i=1}^m$ are phase retrievable weaving fusion frames for $\mathcal H^n$.

\item For every $f, g \in \mathcal H^n$ and for every $\sigma \subset \{1, 2, \cdots , m\}$, there exists a constant $\alpha>0$ so that
\begin{eqnarray*}
&&\frac{1}{4} \left \{\sum\limits_{i\in \sigma} \|v_i ^2 \langle P_{\mathcal V_i}g,f \rangle + v_i ^2 \langle P_{\mathcal V_i}f,g \rangle\|^2 + \sum\limits_{i\in \sigma^c} \|w_i ^2 \langle P_{\mathcal W_i}g,f \rangle + w_i ^2 \langle P_{\mathcal W_i}f,g \rangle\|^2 \right\}
\\&=& \sum\limits_{i\in \sigma} \|v_i ^2 tr(P_{\mathcal V_i}[f,g])\|^2 + \sum\limits_{i\in \sigma^c} \|w_i ^2 tr(P_{\mathcal W_i}[f,g])\|^2
\\&\geq& \alpha \|[f,g]\|_{1}.
\end{eqnarray*}
\end{enumerate}
\end{thm}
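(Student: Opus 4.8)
The plan is to separate the statement into its two assertions, the \emph{equality} linking the bilinear-form expression to the trace expression, and the \emph{lower bound}, which carries the equivalence with phase retrievability. I would first dispose of the equality by a direct trace computation. Writing the rank-one operator in the convention $(f\otimes g)x=\langle x,g\rangle f$, one has $\text{tr}(P_{\mathcal V_i}(f\otimes g))=\langle P_{\mathcal V_i}f,g\rangle$, and hence
$$\text{tr}(P_{\mathcal V_i}[f,g])=\tfrac12\big(\langle P_{\mathcal V_i}f,g\rangle+\langle P_{\mathcal V_i}g,f\rangle\big),$$
with the analogous identity for $P_{\mathcal W_j}$. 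Multiplying by $v_i^2$ (respectively $w_j^2$), taking squared moduli, and summing over $i\in\sigma$ and $j\in\sigma^c$ reproduces exactly the claimed equality, the factor $\tfrac14$ being the square of $\tfrac12$. This step is purely computational and uses no frame hypothesis.

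For the equivalence between phase retrievability (1) and the lower bound (2), the key observation is that, by part (3) of Theorem \ref{Balan}, as $f,g$ range over $\mathcal H^n$ the operator $[f,g]$ ranges over all of $S^{1,1}(\mathcal H^n)$. Thus, for each fixed $\sigma$, the displayed quantity is a continuous quadratic function of $T=[f,g]\in S^{1,1}(\mathcal H^n)$ that vanishes precisely when $T$ lies in the kernel of the associated (woven) phase-lift measurement. I would show that this quantity is strictly positive on $S^{1,1}(\mathcal H^n)\setminus\{0\}$ exactly when $\text{Ker}\,\mathcal F\cap S^{1,1}(\mathcal H^n)=\{0\}$, which by Theorem \ref{thm1} is equivalent to (1). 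For $(1)\Rightarrow(2)$ I would argue by compactness: since $\mathcal H^n$ is finite dimensional, $\text{Sym}(\mathcal H^n)$ is finite dimensional and $S^{1,1}(\mathcal H^n)$ is closed, so its intersection with the trace-norm unit sphere $\{\|T\|_1=1\}$ is compact. On that compact set the continuous function $T\mapsto\sum_{i\in\sigma}\|v_i^2\text{tr}(P_{\mathcal V_i}T)\|^2+\sum_{j\in\sigma^c}\|w_j^2\text{tr}(P_{\mathcal W_j}T)\|^2$ is strictly positive by injectivity, hence attains a positive minimum; taking the minimum also over the finitely many subsets $\sigma\subset\{1,\dots,m\}$ yields a single constant $\alpha>0$. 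The converse $(2)\Rightarrow(1)$ is immediate, since a positive lower bound forces the displayed sum to be nonzero whenever $[f,g]\neq 0$, giving $\text{Ker}\,\mathcal F\cap S^{1,1}(\mathcal H^n)=\{0\}$, and Theorem \ref{thm1} then supplies phase retrievability.

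The main obstacle I anticipate is the scaling mismatch in the displayed inequality: the left-hand side is homogeneous of degree two in $[f,g]$ whereas $\|[f,g]\|_1$ is homogeneous of degree one, so a genuinely \emph{uniform} constant $\alpha$ can only be extracted after normalizing on the trace-norm unit sphere (equivalently, the uniform bound is $\ge\alpha\|[f,g]\|_1^2$); I would make this normalization explicit and let $\alpha$ depend only on the two fusion frames. The uniformity across $\sigma$ is harmless because there are only $2^m$ subsets, so the minimum of the finitely many per-$\sigma$ constants is again strictly positive. The one structural fact I would state carefully is the closedness of $S^{1,1}(\mathcal H^n)$, since the entire quantitative bound rests on the resulting compactness of the unit-sphere slice.
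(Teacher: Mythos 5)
Your proposal is correct and, at its core, follows the same route as the paper's proof: the termwise identity $tr(P_{\mathcal V_i}[f,g])=\tfrac12\big(\langle P_{\mathcal V_i}f,g\rangle+\langle P_{\mathcal V_i}g,f\rangle\big)$, reduction of phase retrievability to $\text{Ker}\,\mathcal F\cap S^{1,1}(\mathcal H^n)=\{0\}$ via Theorem \ref{thm1}, and extraction of $\alpha$ by normalizing $[f,g]$ in trace norm and minimizing over the unit sphere of $S^{1,1}(\mathcal H^n)$. The differences are all in your favour, and they are not cosmetic. First, the paper merely \emph{defines} $\alpha$ as a minimum over $\{T\in S^{1,1}(\mathcal H^n):\|T\|_1=1\}$ and never argues that this minimum exists or is strictly positive; your compactness argument (the set of symmetric operators of rank at most $2$ is closed, hence its intersection with the trace-norm unit sphere is compact in finite dimensions, and the continuous quantity is strictly positive there by injectivity) is exactly the missing justification, and your insistence on checking closedness of $S^{1,1}(\mathcal H^n)$ is the right point of care. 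Second, the paper's entire treatment of $(2)\Rightarrow(1)$ is the sentence ``this implication will be established using a similar approach,'' whereas you supply the actual argument, correctly invoking part (3) of Theorem \ref{Balan} to realize every element of $S^{1,1}(\mathcal H^n)$ as some $[f,g]$. Third, your remark on the scaling mismatch identifies a genuine error in the statement: the displayed sum is homogeneous of degree $2$ in $[f,g]$ while $\alpha\|[f,g]\|_1$ has degree $1$, so the stated inequality cannot hold uniformly (it fails as $[f,g]\to 0$); what the paper's own normalization actually proves is the bound $\geq\alpha\|[f,g]\|_1^2$, exactly as you say, and the paper never notices the discrepancy.

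One caution about your converse step: you treat ``displayed sum $=0$'' as equivalent to ``$T\in\text{Ker}\,\mathcal F$.'' With the paper's literal definition $(\mathcal F(T))_{i,j}=v_i^2\,tr(P_{\mathcal V_i}T)+w_j^2\,tr(P_{\mathcal W_j}T)$, vanishing of all pairwise sums only forces $v_i^2\,tr(P_{\mathcal V_i}T)=c$ and $w_j^2\,tr(P_{\mathcal W_j}T)=-c$ for some common constant $c$, whereas the displayed sum vanishes iff every individual trace vanishes, i.e.\ $c=0$. The implication you need for $(1)\Rightarrow(2)$ (sum $=0$ implies $T\in\text{Ker}\,\mathcal F$) is unproblematic, but the implication needed for $(2)\Rightarrow(1)$, namely $T\in\text{Ker}\,\mathcal F$ implies sum $=0$, requires ruling out $c\neq0$. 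This ambiguity is inherited from the paper, whose own proof of Theorem \ref{thm1} implicitly reads $\mathcal F$ as the family of individual trace measurements rather than pairwise sums; under that reading your argument closes completely, but you should state the reading you are using.
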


\begin{proof}
\noindent (\underline{1 $\implies$ 2}) For every $f,g\in \mathcal H^n$ and for every $\sigma \subset \{1, 2, \cdots , m\}$ we have,
\begin{eqnarray*}
&&\frac{1}{2} v_i ^2 \langle P_{\mathcal V_i}g,f \rangle + \frac{1}{2} v_i ^2 \langle P_{\mathcal V_i}f,g \rangle 
\\ &=& \frac{1}{2} v_i ^2 tr(P_{\mathcal V_i}g \otimes f)+\frac{1}{2} v_i ^2 tr(P_{\mathcal V_i}f \otimes g) 
\\&=& (\mathcal F_\sigma([f,g]))_i.
\end{eqnarray*}
and
\begin{eqnarray*}
	&&\frac{1}{2} v_i ^2 \langle P_{\mathcal W_i}g,f \rangle + \frac{1}{2} v_i ^2 \langle P_{\mathcal W_i}f,g \rangle 
	\\ &=& \frac{1}{2} v_i ^2 tr(P_{\mathcal W_i}g \otimes f)+\frac{1}{2} v_i ^2 tr(P_{\mathcal W_i}f \otimes g) 
	\\&=& (\mathcal F_\sigma([f,g]))_i.
\end{eqnarray*}
Applying Theorem \ref{thm1}, for every $f,g\in \mathcal H^n$ with $[f,g]\neq 0$ we have $\mathcal F_\sigma([f,g])\neq 0$. Therefore,  for every $\sigma \subset \{1, 2, \cdots , m\}$ we have,
\begin{eqnarray*}
\|(\mathcal F_\sigma([f,g]))\|^2 &=&\frac{1}{4} \sum\limits_{i\in \sigma} \|v_i ^2 \langle P_{\mathcal V_i}g,f \rangle + v_i ^2 \langle P_{\mathcal V_i}f,g \rangle\|^2 
\\&+& \frac{1}{4} \sum\limits_{i\in \sigma^c} \|w_i ^2 \langle P_{\mathcal W_i}g,f \rangle + w_i ^2 \langle P_{\mathcal W_i}f,g \rangle\|^2
\\&>&0.
\end{eqnarray*}
Thus we obtain,
\begin{eqnarray*}
&&\frac{1}{4} \sum\limits_{i\in \sigma} \|v_i ^2 \langle P_{\mathcal V_i}g,f \rangle + v_i ^2 \langle P_{\mathcal V_i}f,g \rangle\|^2 
+ \frac{1}{4} \sum\limits_{i\in \sigma^c} \|w_i ^2 \langle P_{\mathcal W_i}g,f \rangle + w_i ^2 \langle P_{\mathcal W_i}f,g \rangle\|^2
\\&=& \sum\limits_{i\in \sigma} \left\|\frac{1}{2} v_i ^2 tr(P_{\mathcal V_i}g \otimes f)+\frac{1}{2} v_i ^2 tr(P_{\mathcal V_i}f \otimes g)\right\|^2 
\\&+& \sum\limits_{i\in \sigma^c} \left\| \frac{1}{2} w_i ^2 tr(P_{\mathcal W_i}g \otimes f) + \frac{1}{2} w_i ^2 tr(P_{\mathcal W_i}f \otimes g)\right\|^2
\\&=& \sum\limits_{i\in \sigma} \|v_i ^2 tr(P_{\mathcal V_i}[f,g])\|^2 + \sum\limits_{i\in \sigma^c} \|w_i ^2 tr(P_{\mathcal W_i}[f,g])\|^2
\\&>&0.
\end{eqnarray*}
For every $\sigma \subset \{1, 2, \cdots , m\}$ let us assume
$$\alpha = \min\limits_{T\in S^{1,1}(\mathcal H^n), \|T\|_{1}} \left(\sum\limits_{i\in \sigma} \|v_i ^2 tr(P_{\mathcal V_i}T\|^2 + \sum\limits_{i\in \sigma^c} \|w_i ^2 tr(P_{\mathcal W_i}T)\|^2\right).$$
Since $\frac{f\otimes g} {\|[f,g]\|_{1}} \in  S^{1,1}(\mathcal H^n)$ and $\left\| \frac{[f,g]}{\|[f,g]\|_{1}} \right\|_{1}=1$, then we have,
\begin{eqnarray*}
&& \sum\limits_{i\in \sigma} \frac{1}{\|[f,g]\|_{1} ^2} \|v_i ^2 tr(P_{\mathcal V_i}[f,g])\|^2 + \sum\limits_{i\in \sigma^c} \frac{1}{\|[f,g]\|_{1} ^2} \|w_j ^2 tr(P_{\mathcal W_i}[f,g])\|^2
 \\&=& \sum\limits_{i\in \sigma}  \left\|v_i ^2 tr \left(P_{\mathcal V_i}\frac{[f,g]}{\|[f,g]\|_{1}}\right)\right\|^2 + \sum\limits_{i\in \sigma^c} \left\|w_i ^2 tr \left(P_{\mathcal W_i}\frac{[f,g]}{\|[f,g]\|_{1}}\right)\right\|^2
 \\&\geq & \alpha.
\end{eqnarray*}
Thus for every $f, g \in \mathcal H^n$ and for every $\sigma \subset \{1, 2, \cdots , m\}$ we obtain the desired result.
\\

\noindent (\underline{2 $\implies$ 1}) This implication will be established using a similar approach as employed in the converse direction.
\end{proof}

The next result demonstrates that the image of phase retrievable weaving fusion frames under a unitary operator remains a phase retrievable weaving fusion frame.

\begin{prop}
Let $\{(\mathcal V_i, v_i)\}_{i=1}^m$ and $\{(\mathcal W_i, w_i)\}_{i=1}^m$ be  phase retrievable weaving fusion frames for $\mathcal H^n$ with bounds $\alpha, \beta$. Suppose $Q\in\mathcal L(\mathcal H^n)$ is a unitary operator, then $\{(Q\mathcal V_i, v_i)\}_{i=1}^m$ and $\{(Q\mathcal W_i, w_i)\}_{i=1}^m$ also form  phase retrievable weaving fusion frames for $\mathcal H^n$.
\end{prop}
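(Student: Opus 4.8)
The plan is to reduce every statement about the transformed subspaces $Q\mathcal V_i$ and $Q\mathcal W_i$ back to the original subspaces via the intertwining identity for orthogonal projections under a unitary. The first step I would establish is that for any closed subspace $\mathcal U \subset \mathcal H^n$ and unitary $Q$, one has $P_{Q\mathcal U} = Q P_{\mathcal U} Q^*$. This is standard: the operator $Q P_{\mathcal U} Q^*$ is self-adjoint and idempotent (using $Q^*Q = \mathcal I$), and its range is exactly $Q\mathcal U$, so it must be the orthogonal projection onto $Q\mathcal U$. Since $Q$ preserves norms, this yields $\|P_{Q\mathcal V_i} f\| = \|Q P_{\mathcal V_i} Q^* f\| = \|P_{\mathcal V_i} Q^* f\|$, and likewise for each $\mathcal W_j$.

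With this identity in hand, the second step verifies that $\{(Q\mathcal V_i, v_i)\}_{i=1}^m$ and $\{(Q\mathcal W_i, w_i)\}_{i=1}^m$ remain weaving fusion frames. For every $\sigma \subset \{1, \dots, m\}$ and $f \in \mathcal H^n$,
\[
\sum_{i\in\sigma} v_i^2 \|P_{Q\mathcal V_i} f\|^2 + \sum_{j\in\sigma^c} w_j^2 \|P_{Q\mathcal W_j} f\|^2 = \sum_{i\in\sigma} v_i^2 \|P_{\mathcal V_i} Q^* f\|^2 + \sum_{j\in\sigma^c} w_j^2 \|P_{\mathcal W_j} Q^* f\|^2.
\]
Applying the universal weaving bounds $\alpha \leq \beta$ of the original frames to the vector $Q^* f$, and using $\|Q^* f\| = \|f\|$ by unitarity, the quantity above is sandwiched between $\alpha\|f\|^2$ and $\beta\|f\|^2$. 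Hence the transformed collections are weaving fusion frames with the same universal bounds.

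The third step, which is the crux, handles phase retrievability. The cleanest route is to observe that the nonlinear operator $\gamma'$ attached to the transformed frames factors through $Q^*$: for each $\sigma$ with $i\in\sigma, j\in\sigma^c$,
\[
(\gamma'(\hat f))_{i,j} = v_i \|P_{Q\mathcal V_i} f\| + w_j \|P_{Q\mathcal W_j} f\| = v_i \|P_{\mathcal V_i} Q^* f\| + w_j \|P_{\mathcal W_j} Q^* f\| = (\gamma(\widehat{Q^* f}))_{i,j},
\]
so that $\gamma' = \gamma \circ \widehat{Q^*}$, where $\widehat{Q^*}$ denotes the map induced by $Q^*$ on the quotient $\hat{\mathcal H^n}$. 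Here I would first check that $\widehat{Q^*}$ is well-defined and bijective: if $f \sim f'$, say $f' = hf$ with $h \in \mathbb T$, then $Q^* f' = h Q^* f$, whence $Q^* f \sim Q^* f'$, and the same argument applied to $Q$ furnishes the inverse. Since $\gamma$ is one-one by hypothesis and $\widehat{Q^*}$ is a bijection, the composition $\gamma'$ is one-one, which is precisely the phase retrievability of the transformed frames.

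I expect the main obstacle to lie in the careful bookkeeping of the third step rather than in any deep difficulty. The delicate point is to confirm that passing to the quotient by $\sim$ commutes with the unitary, that is, that $Q^* g = h Q^* f$ forces $g = hf$ after applying $Q$, so that injectivity genuinely transfers back to the original vectors $f, g$ and not merely to their images $Q^* f, Q^* g$. Once this is in place, the conclusion is immediate.
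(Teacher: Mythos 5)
Your proposal is correct and follows essentially the same route as the paper: both rest on the intertwining identity $P_{Q\mathcal U}=QP_{\mathcal U}Q^*$, verify the universal weaving bounds by applying the original bounds to $Q^*f$, and transfer phase retrievability back through $Q^*$ using unitarity. Your phrasing of the last step as the factorization $\gamma'=\gamma\circ\widehat{Q^*}$ with $\widehat{Q^*}$ a bijection of the quotient is just a cleaner packaging of the paper's argument that equal measurements for $f,g$ force $Q^*f=hQ^*g$ and hence $f=hg$.
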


\begin{proof}
For every $1\leq i \leq m$, the projection operator $P_{Q\mathcal V_i}$ onto $Q\mathcal V_i$ is defined as $P_{Q\mathcal V_i}=Q P_{\mathcal V_i} Q^*$ (see \cite{Ga07}), analogously $P_{Q\mathcal W_i}=Q P_{\mathcal W_i} Q^*$.

Let $f, g \in \mathcal H^n$ so that for every $1\leq i \leq m$, $v_i \|P_{Q\mathcal V_i} f\|=v_i \|P_{Q\mathcal V_i} g\|$. Then we have, 
$v_i \|P_{Q\mathcal V_i} f\|=v_i \|Q P_{\mathcal V_i} Q^* f\|=v_i \| P_{\mathcal V_i} Q^* f\|=v_i \|Q P_{\mathcal V_i} Q^* g\|
=v_i \| P_{\mathcal V_i} Q^* g\|$ 

and $w_i \|P_{Q\mathcal W_i} f\|=w_i \|Q P_{\mathcal W_i} Q^* f\|=w_i \| P_{\mathcal W_i} Q^* f\|=w_i \|Q P_{\mathcal W_i} Q^* g\|=w_i \| P_{\mathcal W_i} Q^* g\|$.

Since $\{(\mathcal V_i, v_i)\}_{i=1}^m$ and $\{(\mathcal W_i, w_i)\}_{i=1}^m$ are  phase retrievable weaving fusion frames there exists $h \in \mathbb T$ such that $Q^*f=hQ^*g$ i.e. $Q^*(f-hg)=0.$ Furthermore, since $Q$ is unitary, we have $f=hg$. It is sufficient to prove that $\{(Q\mathcal V_i, v_i)\}_{i=1}^m$ and $\{(Q\mathcal W_i, w_i)\}_{i=1}^m$ are weaving fusion frames for $\mathcal H^n$.

Since $\{(\mathcal V_i, v_i)\}_{i=1}^m$ and $\{(\mathcal W_i, w_i)\}_{i=1}^m$ are weaving fusion frames, then for every $f\in\mathcal H^n$ and every $\sigma \subset \{1, 2, \cdots , m\}$ we obtain,
\begin{eqnarray*}
\alpha \|f\|^2=\alpha \|Q^*f\|^2 &\leq & \sum\limits_{i\in\sigma} v_i ^2 \| P_{\mathcal V_i} Q^* f\|^2 + \sum\limits_{j\in\sigma^c} w_j ^2 \| P_{\mathcal W_j} Q^* f\|^2
\\ &=& \sum\limits_{i\in\sigma} v_i ^2 \| Q P_{\mathcal V_i} Q^* f\|^2 + \sum\limits_{j\in\sigma^c} w_j ^2 \|Q P_{\mathcal W_j} Q^* f\|^2
\\ &=& \sum\limits_{i\in\sigma} v_i ^2 \| P_{Q\mathcal V_i} f\|^2 + \sum\limits_{j\in\sigma^c} w_j ^2 \| P_{Q\mathcal W_j} f\|^2
\\&\leq & \beta \|Q^*f\|^2=\beta \|f\|^2.
\end{eqnarray*}
Thus  $\{(Q\mathcal V_i, v_i)\}_{i=1}^m$ and $\{(Q\mathcal W_i, w_i)\}_{i=1}^m$ are weaving fusion frames for $\mathcal H^n$. This completes the proof.
\end{proof}

One can verify the phase retrievability of weaving fusion frames by implementing the given algorithm. The following code provides a practical way to check this property.

\section*{Python Code for Complement Property of Phase Retrievable Weaving Fusion Frames}
\begin{lstlisting}[language=Python, caption=Checking Complement Property for Weaving Frames]
	import numpy as np
	import itertools
	
	def has_complement_property(frame, tol=1e-10, verbose=True):
	"""
	Check complement property (CP) for a finite frame in R^d.
	"""
	m = len(frame)
	d = frame[0].shape[0]
	F = np.column_stack(frame)
	
	for r in range(1, m):  # exclude empty and full set
	for subset in itertools.combinations(range(m), r):
	sub_matrix = F[:, subset]
	rank_subset = np.linalg.matrix_rank(sub_matrix, tol)
	complement = [i for i in range(m) if i not in subset]
	comp_matrix = F[:, complement]
	rank_complement = np.linalg.matrix_rank(comp_matrix, tol)
	
	if verbose:
	print(f"Subset {subset} -> rank={rank_subset}, "
	f"Complement {complement} -> rank={rank_complement}")
	
	
	if rank_subset < d and rank_complement < d:
	if verbose:
	print(" Complement property fails here!\n")
	return False
	if verbose:
	print("Complement property satisfied for all subsets.\n")
	return True
	
	# Example in R^3
	v1 = np.array([1,0,0])
	v2 = np.array([0,1,0])
	v3 = np.array([0,0,1])
	v4 = np.array([1,-1,1])
	v5 = np.array([2/3,2,-3])
	
	w1 = np.array([5,-3/2,0])
	w2 = np.array([-1,3,3/2])
	w3 = np.array([-1,-3,2/3])
	w4 = np.array([1,1,1])
	w5 = np.array([1,2,3])
	
	V = [v1, v2, v3, v4, v5]
	W = [w1, w2, w3, w4, w5]
	
	combinations = []
	for choice in itertools.product([0,1], repeat=5):
	frame = [V[i] if choice[i]==0 else W[i] for i in range(5)]
	combinations.append(frame)
	
	for i, frame in enumerate(combinations, 1):
	print("="*40)
	print(f"Checking Combination {i}:")
	has_complement_property(frame, verbose=True)
\end{lstlisting}

 \section{Application to Probabilistic Erasure}
 
In this section, we explore the applications of weaving frames within the framework of probabilistic erasure. We focus on their robustness and stability in recovering data when random loss occurs. Weaving fusion frames offer redundancy, which proves essential in such uncertain environments. This discussion provides foundational insight into their role in signal reconstruction under erasure models. It also prepares the ground for more detailed analysis on their effectiveness in probabilistic data recovery scenarios.
 
Let $\{(\mathcal V_n,1)\}_{n=1}^\infty$ and $\{(\mathcal W_n,1)\}_{n=1}^\infty$ be weaving fusion frames for $\mathcal H$ with the corresponding frame operators $S_{\mathcal V}, S_{\mathcal W}$, then for every $f\in\mathcal H$ and $\sigma \subset \mathbb N$, the associated error operator is given by,
\begin{equation*}
E=\sum\limits_{n \in \sigma} \delta_n P_{\mathcal V_n} f \otimes S_{\mathcal V}^{-1} P_{\mathcal V_n} f + \sum\limits_{n \in \sigma^c} \delta_n P_{\mathcal W_n} f \otimes S_{\mathcal W}^{-1} P_{\mathcal W_n} f,
\end{equation*}
where $\delta_n$ is the standard dirac delta function. 
 
This error operator enables rapid data representation while considerably reducing computational cost. Its performance heavily depends on the selection of weaving frames used in the encoding and decoding phases. Optimizing these frame choices can significantly enhance the technique's overall efficiency and robustness.

In the following result, we demonstrate how weaving frames can effectively mitigate data loss in erasure scenarios.

\begin{thm}
Let $\{(\mathcal V_n,1)\}_{n=1}^m$ and $\{(\mathcal W_n,1)\}_{n=1}^m$ be uniform tight weaving frames for  $\mathbb R^n$ and for every $f\in \mathbb R^n$ $\|P_{\mathcal V_n} f \| = \sqrt n = \|P_{\mathcal W_n} f \|$. Suppose $\epsilon>0$ is very small number for which $\epsilon^2 \geq \frac{n}{m} \text{log} (n)$. Then for every $f \in \mathbb R^n$, there is an absolute constant $M>0$ for which we have, $$E\|\hat f-f\| \leq M \epsilon \|f\|.$$
\end{thm}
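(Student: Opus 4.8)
The plan is to read the erasure as a random sampling process and to prove that the resulting (partial) reconstruction operator concentrates around the identity. I would interpret the symbols $\delta_n$ in the error operator as independent Bernoulli selectors---each weaving measurement surviving with a probability calibrated to $m$---so that $\hat f$ is the signal recovered from the surviving measurements while the full unerased system reconstructs $f$ exactly via (\ref{Eq:Fusion-frame-recons}). The first reduction exploits the \emph{tightness} hypothesis: for uniform tight fusion frames the frame operators are scalar multiples of the identity, so $S_{\mathcal V}^{-1}=A_{\mathcal V}^{-1}\mathcal I$ and $S_{\mathcal W}^{-1}=A_{\mathcal W}^{-1}\mathcal I$. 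Each rank-one summand $P_{\mathcal V_n}f\otimes S_{\mathcal V}^{-1}P_{\mathcal V_n}f$ then collapses to $A_{\mathcal V}^{-1}\,P_{\mathcal V_n}f\otimes P_{\mathcal V_n}f$, and analogously on $\sigma^c$, so the error operator $E$ is exhibited as a sum of independent, self-adjoint, rank-one random operators.

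Next I would write the reconstruction error as $\hat f-f=(R-\mathcal I)f$, where $R$ is the random partial reconstruction map obtained by summing the surviving rank-one contributions with the unbiasing reweighting. Because the full frame reconstructs exactly, after that reweighting one has $\mathbb E[R]=\mathcal I$, so $R-\mathcal I$ is a sum of mean-zero independent self-adjoint terms. The uniform normalization $\|P_{\mathcal V_n}f\|=\sqrt n=\|P_{\mathcal W_n}f\|$ is exactly what bounds each summand: it controls both the almost-sure operator norm of a single term and the matrix variance $\bigl\|\sum_n \mathbb E[(\cdots)^2]\bigr\|$, with both quantities scaling like $n/m$. These are precisely the two inputs a matrix concentration estimate requires.

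The decisive step is to invoke a matrix concentration inequality---matrix Bernstein (or, since only the expected spectral norm is needed, the matrix Chernoff/Hoeffding bound for sums of independent self-adjoint matrices)---which produces
\[
\mathbb E\,\|R-\mathcal I\|\;\le\;C\sqrt{\frac{n\log n}{m}},
\]
the factor $\log n$ being the dimensional term intrinsic to matrix Bernstein in $\mathbb R^n$. Since $\hat f-f=(R-\mathcal I)f$ gives $\mathbb E\|\hat f-f\|\le \mathbb E\|R-\mathcal I\|\,\|f\|$, substituting the hypothesis $\epsilon^2\ge \frac{n}{m}\log n$ turns the right-hand side into $M\epsilon\|f\|$ for an absolute constant $M$, which is the claim. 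The main obstacle is this concentration step: one must verify that the variance proxy and the uniform norm bound both genuinely scale like $n/m$ so that Bernstein's inequality yields the stated $\sqrt{(n\log n)/m}$ rate rather than a cruder dependence, and one must confirm that the tight-frame normalization makes $\mathbb E[R]=\mathcal I$ \emph{exactly}, so that no residual bias term survives under the expectation.
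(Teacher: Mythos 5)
Your proposal follows essentially the same reduction as the paper, differing only in the concentration inequality it invokes, and that difference is worth spelling out. The paper also models the erasure by fair Bernoulli selectors with unbiasing factor $2$, writes $\hat f - f = \tfrac{1}{m}\bigl[\sum_{n\in\sigma}\epsilon_n P_{\mathcal V_n}f + \sum_{n\in\sigma^c}\epsilon_n P_{\mathcal W_n}f\bigr]$ with Rademacher variables $\epsilon_n = 2\tilde{\delta_n}^{p_n}-1$, reduces to bounding the expected operator norm of the rank-one sum $\tfrac1m\sum\epsilon_n P_nf\otimes P_nf$, and uses uniform tightness together with $\|P_nf\|=\sqrt n$ to obtain the identity resolution $\mathcal I_{\mathbb R^n}=\tfrac1m\sum P_nf\otimes P_nf$, hence $\bigl\|\sum P_nf\otimes P_nf\bigr\|=m$ --- exactly your verification that $\mathbb E[R]=\mathcal I$ with the surviving terms reweighted (note the survival probability is $\tfrac12$, not something ``calibrated to $m$''; the $1/m$ comes from the tight-frame constant). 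Where you diverge is the key lemma: the paper cites Rudelson's lemma \cite{Ru99}, a Khintchine-type bound giving $\mathbb E\,\|\sum\epsilon_n P_nf\otimes P_nf\|\leq M\sqrt{n\log n}\,\|\sum P_nf\otimes P_nf\|^{1/2}=M\sqrt{nm\log n}$ in a single step, whereas you invoke matrix Bernstein on the mean-zero summands $Z_n=\tfrac{\epsilon_n}{m}P_nf\otimes P_nf$. Your inputs are computed correctly (variance proxy $\|\sum\mathbb E Z_n^2\|=n/m$ and almost-sure bound $\|Z_n\|=n/m$), but Bernstein then yields the two-term estimate $C\bigl(\sqrt{(n/m)\log n}+(n/m)\log n\bigr)$, not the single term you state; the second term is absorbed into $M\epsilon$ only because $(n/m)\log n\leq\epsilon^2\leq\epsilon$ when $\epsilon$ is small, a point your writeup glosses over and which should be made explicit. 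In exchange, your route uses an off-the-shelf modern tool that also delivers exponential tail bounds and handles general self-adjoint summands, while Rudelson's lemma is tailored to rank-one sums and avoids the spurious second-order term --- which is precisely what the paper's choice of citation buys.
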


\begin{proof}
For every $f \in \mathbb R^n$ and every $\sigma \subset \{1, 2, \cdots, m\}$ we have,
\begin{equation*}
\tilde{E} f = \hat f-f = \frac{1}{m} \left [\sum\limits_{n \in \sigma} \epsilon_n P_{\mathcal V_n} f  + \sum\limits_{n \in \sigma^c} \epsilon_n P_{\mathcal W_n} f  \right],
\end{equation*}
where 
\begin{align*}
\hat f & = \frac{2}{m}  \left [\sum\limits_{n \in \sigma} P_{\mathcal V_n} f  + \sum\limits_{n \in \sigma^c} P_{\mathcal W_n} f  \right]\\ 
& =\frac{2}{m}  \left [\sum\limits_{n \in \sigma} \tilde{\delta_n}^{p_n} P_{\mathcal V_n} f  + \sum\limits_{n \in \sigma^c} \tilde{\delta_n}^{p_n} P_{\mathcal W_n} f  \right],
\end{align*}
 with
$$
\tilde{E}=\sum\limits_{n \in \sigma} \tilde{\delta_n}^{p_n} P_{\mathcal V_n} f  \otimes P_{\mathcal V_n} f  + \sum\limits_{n \in \sigma^c} \tilde{\delta_n}^{p_n} P_{\mathcal V_n} f  \otimes P_{\mathcal V_n} f  
~\mbox{ and }~
\epsilon_n = 2\tilde{\delta_n}^{p_n} - 1
$$ is an independent Bernoulli variable that takes values $1$ and $-1$, each with probability of $\frac{1}{2}$ (see \cite{Pa99}).

Thus it is sufficient to prove that, for every $f\in\mathbb R^n$ and for every $\sigma \subset \{1, 2, \cdots, m\}$,
\begin{equation*}
E\left \|  \frac{1}{m} \left [\sum\limits_{n \in \sigma} \epsilon_n P_{\mathcal V_n} f  \otimes P_{\mathcal V_n} f  + \sum\limits_{n \in \sigma^c} \epsilon_n  P_{\mathcal W_n} f  \otimes P_{\mathcal W_n} f  \right] \right \| \leq M \epsilon.
\end{equation*}
Applying [Lemma, \cite{Ru99}] for every $f\in\mathbb R^n$ and for every $\sigma \subset \{1, 2, \cdots, m\}$, there is a positive constant $M$ we have 

\begin{eqnarray}\label{app1}
&& E \left \| \left [\sum\limits_{n \in \sigma} \epsilon_n P_{\mathcal V_n} f  \otimes P_{\mathcal V_n} f  + \sum\limits_{n \in \sigma^c} \epsilon_n   P_{\mathcal W_n} f  \otimes P_{\mathcal W_n} f  \right] \right \|
\\\nonumber& \leq &  M \sqrt{n \text{log}(n)} 
 \left \| \left [\sum\limits_{n \in \sigma} P_{\mathcal V_n} f  \otimes P_{\mathcal V_n} f  + \sum\limits_{n \in \sigma^c}   P_{\mathcal W_n} f  \otimes P_{\mathcal W_n} f  \right] \right \|^{\frac{1}{2}}.
\end{eqnarray}

Furthermore, since $\{(\mathcal V_n,1)\}_{n=1}^m$ and $\{(\mathcal W_n,1)\}_{n=1}^m$  are uniform tight weaving frames for   $\mathbb R^n$ and for every $f\in \mathbb R^n$, $\|P_{\mathcal V_n} f\| = \sqrt n = \|P_{\mathcal W_n} f\|$, then for every $\sigma \subset \{1, 2, \cdots, m\}$ we have,
\begin{equation*}
\mathcal I_{\mathbb R^n} =  \frac{1}{m} \left [\sum\limits_{n \in \sigma}  P_{\mathcal V_n} f  \otimes P_{\mathcal V_n} f + \sum\limits_{n \in \sigma^c}  P_{\mathcal W_n} f  \otimes P_{\mathcal W_n} f \right], 
\end{equation*}
where $\mathcal I_{\mathbb R^n}$ is the identity operator on  $\mathbb R^n$.

Hence we obtain,
\begin{equation}\label{app2}
\left \| \left [\sum\limits_{n \in \sigma} P_{\mathcal V_n} f  \otimes P_{\mathcal V_n} f + \sum\limits_{n \in \sigma^c}  P_{\mathcal W_n} f  \otimes P_{\mathcal W_n} f \right] \right \| =m.
\end{equation}
Thus using the equations (\ref{app1}) and (\ref{app2}) we obtain,
\begin{equation*}
E\left \|  \frac{1}{m} \left [\sum\limits_{n \in \sigma} \epsilon_n P_{\mathcal V_n} f  \otimes P_{\mathcal V_n} f + \sum\limits_{n \in \sigma^c} \epsilon_n  P_{\mathcal W_n} f  \otimes P_{\mathcal W_n} f \right] \right \| \leq M \sqrt{\frac{n}{m}} ~\sqrt {\text{log}(n)}.
\end{equation*}
Therefore, we achieved our result due to the fact $\epsilon^2 \geq \frac{n}{m} \text{log} (n)$.
\end{proof}




 \end{document}